\def\be{\begin{equation}}
\def\ee{\end{equation}}
\def\bse{\begin{subequations}}
\def\ese{\end{subequations}}
\newtheorem{thm}{Theorem}
\newtheorem{rem}[thm]{Remark}
\def\bse{\begin{subequations}}
\def\ese{\end{subequations}}
\title[Blow-up for semilinear filtration problems]{On the finite time blow-up for filtration problems with nonlinear reaction}
\author{K. Fellner}
 \address{Klemens Fellner \hfill\break
Institute for Mathematics and Scientific Computing, Karl-Franzens-University Graz 
Heinrichstr. 36 - A-8010 Graz, Austria}
\email{Klemens.Fellner@uni-graz.at}
 \author{E. Latos}
 \address{Evangelos Latos \hfill\break
Institute for Mathematics and Scientific Computing, Karl-Franzens-University Graz 
Heinrichstr. 36 - A-8010 Graz, Austria and\hfill\break
University of Mannheim, D-68161 Mannheim, Germany}
 \email{evangelos.latos@math.uni-mannheim.de}
\author{G. Pisante}
\address{Department of Mathematics and Physics, Seconda Universit\`a degli Studi di Napoli,
Viale Lincoln, 5
- 81100 Caserta,
Italy}
\begin{document}

\begin{abstract}
We present results for finite time blow-up for filtration problems with nonlinear reaction under appropriate assumptions on the nonlinearities and the initial data. 
In particular, we prove first finite time blow up of solutions subject to sufficiently large initial data provided that the reaction term "overpowers" the nonlinear diffusion in a certain sense.
Secondly, under related assumptions on the nonlinearities, we show that initial data above positive stationary state solutions will 
always lead to finite time blow up.
\end{abstract}

\keywords{ 
Finite time blow-up, Filtration problem, Nonlinear diffusion, Robin boundary conditions, Reaction diffusion problems, Maximal solutions}

\subjclass{35B44, 35K55, 76S05}

\maketitle


\section{Introduction}
In this paper, we shall study the blow-up of solutions $u=u(x,t)$ to the following filtration 
problem
\bse\label{filtr}
\begin{align}
&\ u_t=\Delta
K(u)+\,\lambda f(u),&&x\in \Omega, \quad t>0,\label{eq122}\\
&\mathcal{B}(K(u))\equiv\hat
n\cdot \nabla K(u)+\beta(x)K(u)=0,
&&x\in \partial\Omega,\quad t>0,\label{eq123}\\
&u(x,0)=u_0(x)\geq 0,&& x\in \Omega, 
\end{align}
\ese
where $\Omega$ is a bounded domain of $\mathbb{R}^N$ with sufficient smooth boundary $\partial\Omega$ and $\hat{n}$ denotes the outer unit normal vector. 

In problem \eqref{filtr}, the non-linear functions $f$ and $K$, are supposed to be in $C^3(\mathbb{R})$ and to satisfy the following 
positivity, growth, monotonicity and convexity assumptions:
\be\label{314}
K(0) \ge 0,\ K(s)>0, \quad \mbox{for}\ s\in\mathbb{R}^{+}, \qquad \mbox{and}\qquad  K'(s),\
K''(s)>0, \quad \mbox{for}\ s\in\mathbb{R}^{+}_{0},
\ee
\be \label{fun13333}f(s)>0,\ f'(s)>0,\ f''(s)>0, \quad
\mbox{for}\;\;s\in\mathbb{R}_{0}^{+}, \ee
\be\label{fun2444}\int_{0}^{\infty}\frac{K'(s)}{f(s)}ds<\infty,\quad \mbox{which implies}\;\; \int_{0}^{\infty}\frac{ds}{f(s)}<\infty.
\ee
Moreover, we assume the constant $\lambda>0$ to be positive and the coefficient  $0\leq\beta(x)\leq \infty$ to be in
$C^{1+\alpha}(\partial\Omega)$ for $\alpha>0$ wherever it is bounded. Note that 
$\beta\equiv0$, $\beta\equiv\infty$ and $0<\beta<\infty$ specify homogeneous
Neumann, Dirichlet and Robin boundary conditions, respectively. Moreover, this type of conditions are a consequence of Fourier's law for diffusion when considering either conservation of mass, or conservation of energy (see for example \cite{va}).

We remark that by imposing non-negative initial data $u_0(x)\geq 0$, the assumed positivity $f(u)>0$ implies the positivity of solutions to \eqref{filtr}, i.e. $u>0$ in $\Omega$ for $t>0$. For the existence of a unique classical local solution $u\in C^{2,1}(\Omega_T)$  to problem \eqref{filtr}--\eqref{fun13333} we refer to \cite{vldt2,vldt,pqps,sa1,soup,va} and the references therein.

Under the Osgood type condition \eqref{fun2444}, which is necessary for the blow-up of solutions, the filtration problem \eqref{filtr} will exhibit a blow-up behaviour if the forcing term is sufficiently "strong" and the initial data are sufficiently "large". This can be illustrated by the following example, which applies Kaplan's method, i.e. it investigates the evolution 
of the Fourier coefficient 
\begin{equation}\label{Kaplan}
B(t):= \int_\Omega u(x,t) \,\phi(x)\, dx,\qquad
\dot{B}(t)=
\int_{\Omega} \Delta K(u)\,\phi\,dx+\lambda \int_{\Omega}
f \phi\,dx,
\end{equation}
where the assumptions on $\beta$ ensure that $\phi(x)$ can be chosen as the positive (in $\Omega$) and $L^1$-normalised (i.e. $\|\phi\|_1=1$) eigenfunction corresponding to the first eigenvalue $\mu$ of the following auxiliary elliptic problem with Robin boundary conditions:
\be
\label{robin}
\begin{cases}
\Delta \phi+\mu\phi=0, &  \qquad x\in
\Omega,  \\
 \hat{n}\cdot\nabla\phi+\beta(x)\phi=0, &  \qquad x\ \mbox{on}\
\partial\Omega.
\end{cases}
\ee
After integration by parts in \eqref{Kaplan} and using the eigenvalue problem (\ref{robin}) we obtain
$
\dot{B}(t)=
-\mu\int_{\Omega}K\,\phi\,dx+\lambda \int_{\Omega}
f \phi\,dx.
$
Hence, a comparison condition between $f$ and $K$ of the following kind
\be\label{DC1}
\forall t>0, \qquad \int_{\Omega}\left(f(u(x,t))-K(u(x,t))\right)\phi(x)\,dx \ge 0
\ee
yields, for $\lambda>\mu$ and by applying Jensen's inequality,
$$
\dot{B}(t)  \geq (\lambda-\mu)\int_{\Omega}f(u(x,t))\,\phi(x)\,dx
\geq (\lambda-\mu)f(B).
$$

Thus, the Osgood type condition \eqref{fun2444} implies the finite-time blow-up of $B(t)$ (cf. \cite[Section 4.1]{vldt2}). Moreover, as a by-product of Kaplan's method, the 
first eigenvalue $\mu$ provides a lower bound $\mu<\lambda$, above which blow-up occurs. 

These phenomena are connected with the existence of solutions to the steady-state problem corresponding to (\ref{filtr}),  
\be\label{ss}
\begin{cases}
\Delta(K(w))+\lambda f(w) = 0,  & \qquad x \in \Omega,\\
\mathcal{B}(K(w))=0, & \qquad x \in \partial \Omega.
\end{cases}
\ee
It has been shown in \cite{vldt2}, in the closed spectrum case scenario, that problem \eqref{ss} exhibits a critical (i.e maximal) 
value of the parameter $\lambda$, say $\lambda^*$, such that (any kind of) solution 
to \eqref{ss} does not exist for $\lambda>\lambda^*$, while there exist bounded 
solutions to \eqref{ss} for all $0<\lambda\le\lambda^*$. In fact, there exist at least two solutions to \eqref{ss} for $\lambda$ close to $\lambda^*$.
The case of steady-state solutions to the critical parameter $\lambda^*$ is more intricate, see \cite{vldt2,vldt}. Since $\mu \geq \lambda^*$ (see \cite{cr,kc} or \cite{vldt2} for a proof under the additional assumption \eqref{DC1}), Kaplan's method is in general not sharp enough to treat the full supercritical range $\lambda > \lambda^*$. Nevertheless, in {\cite[Section 4.2]{vldt2}} it has been shown under a mild extra condition on $f$ and $K$ that for all $\lambda > \lambda^*$ blow-up of solutions of \eqref{filtr} subject to any initial data $u_0\geq 0$ occurs (see also \cite{bel,bcmr,l}).

The aim of this work is to complement the analysis of blow-up of solutions for the above filtration problem \eqref{filtr}.
In a first result (see Theorem \ref{th6} below), we shall assume generalised comparison conditions and characterise sufficiently large initial data such that the solution to \eqref{filtr} blows up in finite time even in the subcritical region $\lambda<\lambda^*$.
Moreover, in Theorem \ref{blowSS}, we shall prove that solutions of \eqref{filtr} subject to initial data above the positive solution of the steady state problem (which exists since $\lambda<\lambda^*$) blows up in finite time.

\section{Blow-up analysis       }
In the subcritical case $\lambda < \lambda^*$, the large-time behaviour of solutions to the semi-linear filtration problem \eqref{filtr}--\eqref{fun13333} depends strongly on the initial data $u_0$. In particular, as one would expect, blow-up occurs for large enough initial data. A first result in this direction can again be proven by Kaplan's method. More precisely, we can state the following:

\begin{thm}[Subcritical Blow-Up for Large Initial Data]\label{th6}\hfill\\
Let  the assumptions \eqref{314}, \eqref{fun13333} and \eqref{fun2444} hold. Let $\mu$ be the first eigenvalue of the problem \eqref{robin} and $\phi$ be the corresponding positive normalised eigenfunction.  Let  $u(x,t)$ denote the solution of \eqref{filtr}. 
Assume either that there exists a \emph{concave} function $\psi: \mathbb{R}_+\mapsto \mathbb{R}_+$ 
 satisfying $\gamma:=\limsup_{s\to \infty} \frac{\psi(s)}{s} < \frac{\lambda}{\mu}$ and
such that  
\be\label{lidc}
\int_{\Omega}\left[ \psi(f(u(x,t)))-K(u(x,t)\right]\phi(x)\,dx \ge 0, \qquad \forall t>0,
\ee
or that there exists a \emph{convex} function $\varphi: \mathbb{R}_+\mapsto \mathbb{R}_+$ with $\varphi(0)=0$ and  $\kappa :=\liminf_{r\to \infty}  \frac{\varphi(r)}{r} > \frac{\mu}{\lambda}$
such that
\be\label{lidd}
\int_{\Omega}\left[ f(u(x,t))-\varphi(K(u(x,t))\right]\phi(x)\,dx \ge 0, \qquad \forall t>0,\quad \text{and} \quad \int_{1}^{\infty}\frac{ds}{K(s)}<\infty.
\ee
Then, the solution $u(x,t)$ of \eqref{filtr}--\eqref{fun13333} blows up in finite time, provided that the initial data $u_0 \ge 0$ are sufficiently large.
\end{thm}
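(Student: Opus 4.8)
The plan is to push the Kaplan-type argument already begun in the introduction further, working with the functional $B(t)=\int_\Omega u(x,t)\,\phi(x)\,dx$, for which we recorded
\[
\dot{B}(t)=-\mu\int_\Omega K(u)\,\phi\,dx+\lambda\int_\Omega f(u)\,\phi\,dx .
\]
The crucial structural fact is that $\|\phi\|_1=1$, so $\phi\,dx$ is a probability measure on $\Omega$ and Jensen's inequality is available in both the convex and the concave direction. In either alternative the strategy is to reduce the evolution of $B$ to an autonomous scalar differential inequality of the form $\dot B\ge c\,g(B)$ with $c>0$ and $g\in\{f,K\}$, and then to invoke the appropriate Osgood-type integrability to conclude blow-up in finite time.

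For the first alternative I would use the comparison condition \eqref{lidc} to replace $\int_\Omega K(u)\phi\,dx$ by $\int_\Omega\psi(f(u))\phi\,dx$ in the formula for $\dot B$, and then apply Jensen's inequality for the \emph{concave} function $\psi$, obtaining $\int_\Omega\psi(f(u))\phi\,dx\le\psi(F)$ with $F:=\int_\Omega f(u)\phi\,dx$. This gives $\dot B\ge\lambda F-\mu\psi(F)$. The growth hypothesis $\gamma=\limsup_{s\to\infty}\psi(s)/s<\lambda/\mu$ lets me fix $\gamma'\in(\gamma,\lambda/\mu)$ and $S_0$ with $\psi(s)\le\gamma' s$ for $s\ge S_0$, so that $\dot B\ge(\lambda-\mu\gamma')F$ whenever $F\ge S_0$. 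A second application of Jensen, now for the \emph{convex} $f$, yields $F\ge f(B)$, hence $\dot B\ge(\lambda-\mu\gamma')f(B)$ as soon as $f(B)\ge S_0$. For the second alternative the roles of $f$ and $K$ are interchanged: condition \eqref{lidd} replaces $\int_\Omega f(u)\phi\,dx$ by $\int_\Omega\varphi(K(u))\phi\,dx$, Jensen for the convex $\varphi$ gives $\int_\Omega\varphi(K(u))\phi\,dx\ge\varphi(G)$ with $G:=\int_\Omega K(u)\phi\,dx$, and so $\dot B\ge\lambda\varphi(G)-\mu G$. The hypothesis $\kappa=\liminf_{r\to\infty}\varphi(r)/r>\mu/\lambda$ produces $\kappa'\in(\mu/\lambda,\kappa)$ and $R_0$ with $\varphi(r)\ge\kappa' r$ for $r\ge R_0$, whence $\dot B\ge(\lambda\kappa'-\mu)G$ for $G\ge R_0$; Jensen for the convex $K$ then gives $G\ge K(B)$ and therefore $\dot B\ge(\lambda\kappa'-\mu)K(B)$ once $K(B)\ge R_0$.

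In either case I would next choose the initial data large enough that $B(0)=\int_\Omega u_0\phi\,dx$ lies above the relevant threshold, i.e. $f(B(0))\ge S_0$ in the first case and $K(B(0))\ge R_0$ in the second; since $\|\phi\|_1=1$ this holds whenever $u_0$ is sufficiently large pointwise. The resulting differential inequality forces $\dot B>0$, so $B$ increases and never leaves the admissible region, and separating variables gives $t\le c^{-1}\int_{B(0)}^{B(t)}ds/g(s)$. By \eqref{fun2444} in the first case, respectively by the extra assumption $\int_1^\infty ds/K(s)<\infty$ recorded in \eqref{lidd} in the second, the right-hand integral remains bounded as $B(t)\to\infty$, so $B$ must diverge at some finite time $T$. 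Since $u\ge0$, $\phi\ge0$ and $\|\phi\|_1=1$ give $B(t)\le\|u(\cdot,t)\|_\infty$, the solution $u$ must blow up by time $T$.

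The main technical point to watch is the chaining of the two Jensen inequalities in each case — one to move the auxiliary nonlinearity $\psi$ (or $\varphi$) outside the integral and one to bound $F$ (or $G$) below by $f(B)$ (or $K(B)$) — together with the bookkeeping needed to guarantee that $B$ stays in the range where the linearised estimates $\psi(s)\le\gamma' s$ and $\varphi(r)\ge\kappa' r$ are valid. This last step is clean precisely because the coefficients $\lambda-\mu\gamma'$ and $\lambda\kappa'-\mu$ are strictly positive, so that $B$ is monotone increasing and the admissible region is forward invariant; the sign conditions $\gamma<\lambda/\mu$ and $\kappa>\mu/\lambda$ are exactly what makes this work.
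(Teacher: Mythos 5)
Your proposal is correct and follows essentially the same route as the paper's proof: the same Kaplan functional $B(t)$, the same use of the comparison conditions \eqref{lidc}/\eqref{lidd} followed by the two chained Jensen inequalities, and the same linearised growth bounds (your $\gamma'$, $S_0$ and $\kappa'$, $R_0$ play exactly the roles of the paper's $(\lambda-\varepsilon)/\mu$, $s_\varepsilon$ and $r_\varepsilon$) before closing with the Osgood integrals. The only cosmetic difference is that the paper parametrises the threshold by $\varepsilon$ and then minimises over $\varepsilon$ to get an explicit bound on the blow-up time, which your argument omits but does not need for the statement as given.
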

\begin{rem}
The proof of Theorem \ref{th6} entails a sufficient condition on initial data $u_0$
to yield finite time blow-up of solutions to \eqref{filtr} under the above assumptions. More precisely, blow-up occurs for initial data satisfying
$$
f(B(0))=f\left(\int_{\Omega} u_0\,\phi\, dx\right)  > s_0, \qquad\text{where}\quad
s_0 = \inf_{s\in\mathrm{R}_+}\{\lambda\, {s} > \mu\,\psi(s)\}\ge0,
$$
or 
$$
K(B(0))=K\left(\int_{\Omega} u_0\,\phi\, dx\right)  > r_0, \qquad\text{where}\quad
r_0 = \inf_{r\in\mathrm{R}_+}\{\lambda\, \varphi(r)> \mu\, r  \}\ge 0.
$$
In particular, if $\psi$ is such that $s_0=0$ holds, then finite time blow-up occurs unconditionally for all initial data $u_0\ge0$, since assumption \eqref{fun13333} implies always $f(0)>0=s_0$.
\end{rem}

\begin{proof} [Proof of Theorem \ref{th6}]
At first, by multiplication of \eqref{eq122} with the eigenfunction $\phi$ and by integration over $\Omega$, we obtain the following evolution law for the 
Fourier coefficient $B(t)$,
$$
\dot{B}(t)=\frac{d}{d t}\int_{\Omega}u\,\phi\, dx=
\int_{\Omega} \Delta K(u)\,\phi\,dx+\lambda \int_{\Omega}
f(u)\, \phi\,dx.
$$ 
Assuming, in order to get a contradiction, that $u$ is a global-in-time solution, we apply Green's identity and remark that the boundary terms cancel since the eigenproblem \eqref{robin} is chosen to satisfy the same boundary condition as the filtration problem \eqref{eq123}. Thus, we obtain
\be\label{aa}
\dot{B}(t)=-\mu\int_{\Omega} K(u)\, \phi \,dx +\lambda
\int_{\Omega} f(u)\, \phi\,dx.
\ee 

We shall start by considering assumption \eqref{lidc}. By inserting assumption \eqref{lidc} into \eqref{aa} and applying Jensens's inequality with respect to the normalised positive measure $\phi(x)\,dx$, we get the estimate 
\begin{equation}
\dot{B}(t)
\ge  \lambda \int_\Omega f(u)\, \phi \,dx - \mu \int_\Omega  \psi(f(u))\, \phi \,dx 
\ge \lambda \int_\Omega f(u)\,\phi\,dx - \mu\, \psi\left(\int_\Omega f(u)\,\phi\,dx\right).
\label{aaa}
\end{equation}
Next, we define $s(t) := \int_\Omega f(u)\,\phi\,dx$. Due to the convexity of $f$, we observe that, again by Jensen's inequality, 
\begin{equation}\label{sB}
s(t)=\int_\Omega f(u)\,\phi\,dx\ge f\left(\int_\Omega u\,\phi\,dx\right) = f(B(t)).
\end{equation}

Then, we set for any $0<\varepsilon < \lambda-\mu \gamma$ 
\begin{equation}\label{sel}
s_{\varepsilon} := 
\inf_{s \in\mathbb{R}_+}\left\{\frac{\lambda -\varepsilon}{\mu}  > \frac{\psi(s)}{s} \right\}.
\end{equation}
Note that assumptions on $\psi$  
imply that $0\le s_{\varepsilon}<+\infty$ and we have $\lambda s-\mu \,\psi(s) > \varepsilon s$ for all $s>s_{\varepsilon}$. Therefore, we can continue to estimate \eqref{aaa} by using \eqref{sB} and \eqref{sel} to obtain
\begin{equation}\label{***}
\dot{B}(t) \ge \varepsilon s(t) \ge \varepsilon f(B(t))>0,\qquad\qquad \text{for all}\quad 
s(t)\ge s_{\varepsilon}.
\end{equation}
The differential inequality \eqref{***} implies the finite time blow-up of the Fourier coefficient $B(t)$ 
provided that the initial value $B(0)$ is chosen large enough such that $s(0)\ge f(B(0))\ge  s_{\varepsilon}$. Moreover, the blow-up occurs at latest at time 
$
T^*_{\varepsilon} \le \frac{1}{\varepsilon} \int_{B(0)}^{\infty}\frac{ds}{f(s)} <+\infty$, provided that $ B(0)\ge f^{-1}(s_{\varepsilon})
$.
Thus, minimisation in $\varepsilon$ yields blow-up the latest at time
\begin{equation*}
T^* \le \inf_{0<\varepsilon<\lambda-\mu \gamma}
\left\{\frac{1}{\varepsilon} \int_{f^{-1}(s_{\varepsilon})}^{\infty}\frac{ds}{f(s)} \right\}<+\infty.
\end{equation*}

We shall now consider assumption \eqref{lidd} and proceed in an analogous way to above. Using \eqref{lidd} in \eqref{aa} and applying Jensen's inequality for the convex function $\varphi$ yields
\begin{equation}
\dot{B}(t)
\ge  \lambda \int_\Omega \varphi(K(u))\, \phi \,dx - \mu \int_\Omega  K(u)\, \phi \,dx 
\ge \lambda\, \varphi\left(\int_\Omega K(u)\,\phi\,dx\right) - \mu \int_\Omega K(u)\,\phi\,dx.
\label{bbb}
\end{equation}
Similar to above, we define $r(t) := \int_\Omega K(u)\,\phi\,dx$ and observe as in \eqref{sB} that the convexity of $K$ and Jensen's inequality with respect to the normalised positive measure $\phi$ imply 
$r(t)\ge K(B(t))$.

Then, we denote, for any $0<\varepsilon < \kappa \lambda-\mu$, $r_{\varepsilon} := \inf_{r \in\mathbb{R}_+}\left\{\lambda \varphi(r)-\mu r > \varepsilon r \right\}$.
Note that the assumptions on $\varphi$ imply that $0\leq r_{\varepsilon}<+\infty$ and we have that $\lambda \varphi(r)-\mu r > \varepsilon r$ for all $r>r_{\varepsilon}$. Therefore, we can continue to estimate \eqref{bbb} as
\begin{equation*}
\dot{B}(t) \ge \lambda\, \varphi(r(t))-\mu \,r(t) > \varepsilon\, r(t) \ge \varepsilon \,K(B(t))>0,\qquad\qquad \text{for all}\quad 
r(t)\ge r_{\varepsilon}.
\end{equation*}
Thus, analog to above, we obtain finite time blow-up latest at time
\begin{equation*}
T^* \le \inf_{0<\varepsilon< \kappa \lambda-\mu}
\left\{\frac{1}{\varepsilon} \int_{K^{-1}(r_{\varepsilon})}^{\infty}\frac{ds}{K(s)} \right\}<+\infty.
\end{equation*}
This finishes the proof.
\end{proof}

\section{Blow-up for initial data above positive steady state}
\label{sec-steady}

In this section, we will follow ideas of \cite{l} and \cite{soup} to prove that solutions of the filtration problem \eqref{filtr} subject to initial data $u_0$ larger than a positive solution of the associated stationary problem will blow-up in finite time provided suitable conditions on the involved nonlinearities hold, in particular that the function $g(s):=f(K^{-1}(s))$ is convex. Thus, under this assumption, we are able to sharpen the result of Theorem \ref{th6}. We remark also that solutions being initially below a positive steady state remain naturally bounded by the steady state solution due to a comparison principle (see \cite{vldt}) and thus exist globally in time.

In the following, we say that $w =w(x)> 0$ is a classical solution of \eqref{ss},
if $z=z(x)=K(w(x))$ is a classical solution 
of
\be\label{ss2}
\begin{cases}
\Delta z+\lambda g(z) = 0, & x \in \Omega, \\
\mathcal{B}(z)=0, & x \in \partial \Omega,
\end{cases}
\ee
where $g(z)=f(K^{-1}(z))=f(w)$ with $z=K(w)$. 
The assumptions \eqref{314}, in particular the monotonicity property of $K$ implies that both the above
steady-state problems are equivalent with respect to the existence and the number of solutions. For a positive steady-state solution $w>0$ of \eqref{ss} (see \cite[Section 3.2]{vldt}), the eigenvalue problem associated to the 
linearisation of the filtration problem \eqref{filtr} around $w$ (i.e. we expand \eqref{filtr} according to $u(t,x)=w(x)+\Phi(t,x)$
and decompose $\Phi(t,x)=e^{-\mu t} \phi(x)$),  reads as 
\be\label{lin}
 \begin{cases}
        -\Delta [K'(w)\phi] = \lambda f'(w)\phi + \mu \phi, & \textrm{ in } \Omega, \\
        \mathcal{B}(K'(w)\phi) =0,  & \textrm{ on } \partial \Omega .
 \end{cases}
\ee
We remark that the linearisation eigenvalue problem \eqref{lin} has a solution for each  $\lambda\in(0,\lambda^*]$ (cfr. \cite[Theorem 8]{vldt}). 
Moreover, it has been shown that the first eigenvalue $\mu(\lambda)$ of \eqref{lin} is negative if the function $g(s)=f(K^{-1}(s))$ is convex, for instance.

\begin{thm}[Blow-up Above Positive Steady States]\label{blowSS}
Let $\Omega$ be a bounded domain of class $C^{2,\alpha}$ and $w>0$ be a classical positive solution of the stationary-state problem \eqref{ss}. Let $u\in C(\overline{\Omega} \times [0,T))$ be a positive solution of \eqref{filtr} and $T>0$ be the maximal time of existence. Assume that the function $g(s):=f(K^{-1}(s))$ is convex. Assume initial data $u_{0}\geq w$ in $\Omega$ with $u_0\not=w$. Then, the solution blows-up in finite time.
\end{thm}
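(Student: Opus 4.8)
The plan is to run an eigenfunction argument in the spirit of Kaplan's method, but now projecting the evolution onto the \emph{unstable} mode of the linearisation \eqref{lin} and then closing a superlinear differential inequality by means of the Osgood condition \eqref{fun2444}. It is convenient to work with $v:=K(u)$ and $z:=K(w)$, so that $v$ solves $u_t=\Delta v+\lambda g(v)$ with $u=b(v)$, $b:=K^{-1}$, while $z$ solves \eqref{ss2}; both $v$ and $z$ satisfy the homogeneous Robin condition $\mathcal B(\cdot)=0$. Set $\psi:=K'(w)\phi$, where $\phi>0$ is the first eigenfunction of \eqref{lin}. Then $\psi>0$, $\mathcal B(\psi)=0$, and $\psi$ solves the weighted problem $-\Delta\psi-\lambda g'(z)\psi=\mu\,b'(z)\psi$ with $\mu=\mu(\lambda)<0$; this negativity, which follows from the convexity of $g$, is exactly what makes $w$ unstable and drives the blow-up.

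First I would record the ordering: since $u_0\ge w$ and $w$ is stationary, the comparison principle for \eqref{filtr} (see \cite{vldt}) gives $u(\cdot,t)\ge w$, hence $v\ge z$, on $[0,T)$, and the strong maximum principle upgrades this to $u>w$ in $\Omega$ for $t>0$. I would then study
\[
J(t):=\int_\Omega (u-w)\,\psi\,dx=\int_\Omega\big(b(v)-b(z)\big)\psi\,dx\ge 0 .
\]
Differentiating gives $\dot J=\int_\Omega(\Delta v+\lambda g(v))\psi\,dx$; then Green's identity (all boundary terms cancel because $v$, $z$, $\psi$ obey the same condition $\mathcal B(\cdot)=0$), the steady-state relation $\Delta z+\lambda g(z)=0$ and the eigenvalue equation for $\psi$ yield, after rearranging,
\[
\dot J=\lambda\int_\Omega\big[g(v)-g(z)-g'(z)(v-z)\big]\psi\,dx+|\mu|\int_\Omega (v-z)\,b'(z)\,\psi\,dx .
\]
Both integrands are nonnegative: the first by convexity of $g$, the second because $\mu<0$, $v\ge z$, $b'(z)>0$ and $\psi>0$. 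Using $v-z\ge K'(w)(u-w)$ together with the uniform positive lower bounds of $b'(z)$, $K'(w)$ and $\psi$ on the compact $\overline\Omega$, the second term alone yields $\dot J\ge c_0 J$ for some $c_0>0$, so that $J$ exceeds any prescribed level in finite time.

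The crux is to upgrade this to a \emph{superlinear} inequality. Discarding the nonnegative $\mu$-term and using that $g'$ is nondecreasing with $z\le z_{\max}:=\max_{\overline\Omega}z<\infty$, I would replace $-g'(z)(v-z)$ by $-M(v-z)$, $M:=g'(z_{\max})$, and then apply Jensen's inequality to $\int_\Omega g(v)\psi\,dx$ against the probability measure $\psi\,dx/m$, $m:=\int_\Omega\psi\,dx$, to obtain $\dot J\ge \lambda m\,[\,g(\bar v)-M\bar v\,]-\text{const}$, where $\bar v:=m^{-1}\int_\Omega v\psi\,dx$. Since \eqref{fun2444} forces $g$ to be superlinear, the term $g(\bar v)-M\bar v$ absorbs the linear correction and is bounded below by $\tfrac12 g(\bar v)$ for $\bar v$ large; finally the concavity of $b=K^{-1}$ gives $J\le B\,m(\bar v-\bar z)$ for a constant $B$, i.e. $\bar v\gtrsim J$, whence $\dot J\ge c\,g(c'J)$ once $J$ is large. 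Because the substitution $s=K(\tau)$ turns $\int^\infty ds/g(s)$ into $\int^\infty K'(\tau)/f(\tau)\,d\tau$, the Osgood condition \eqref{fun2444} is \emph{precisely} the integrability that makes this differential inequality blow up in finite time, forcing $\|u(\cdot,t)\|_\infty\to\infty$ as $t\to T<\infty$. I expect the main obstacle to lie in this last step: one must absorb the $x$-dependent linear correction $g'(z)(v-z)$ into the superlinear growth of $g$ and transfer the spatial average $\bar v$ back to $J$ through the concavity of $K^{-1}$, keeping track throughout of the uniform bounds on $\psi$, $b'(z)$ and $g'(z)$ provided by $w>0$ and the compactness of $\overline\Omega$, and matching the resulting integrability condition exactly to \eqref{fun2444}.
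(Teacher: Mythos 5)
Your proposal is correct in substance and, up to a change of variables, arrives at exactly the paper's differential identity: your $J(t)=\int_\Omega(u-w)K'(w)\phi\,dx$ is the paper's $A(t)$, and your formula for $\dot J$ coincides with \eqref{AA} after substituting $g'(z)=f'(w)/K'(w)$ and $b'(z)\psi=\phi$. Where you genuinely diverge is in closing this identity into a blow-up inequality. The paper proves the pointwise lower bound \eqref{major}, $K'(w)(f(w+s)-f(w))-f'(w)(K(w+s)-K(w))\ge\Lambda\,h(s)$ with $h(s)=f(s)-f(0)-sf'(0)$, via a dichotomy in $s$ (large $s$ through the growth condition \eqref{line}, small $s$ through a second-order Taylor expansion and \eqref{neasxe1}), then discards the $\mu$-term and applies Jensen to the convex function $h$ with respect to the normalised measure $K'(w)\phi\,dx$, obtaining the single autonomous inequality $A'\ge\lambda\Lambda\,h(A)$. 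You instead run a two-phase argument: first the unstable mode ($\mu<0$ together with $K(u)-K(w)\ge K'(w)(u-w)$) gives $\dot J\ge|\mu|J$, driving $J$ past any prescribed level; then, discarding the $\mu$-term, you bound the convexity remainder of $g$ below by $g(v)-Mv-C$, apply Jensen to $g$ directly, and use concavity of $K^{-1}$ to get $\bar v\gtrsim J$, landing on $\dot J\ge c\,g(c'J)$ for large $J$, whose Osgood integral is, as you correctly note, exactly \eqref{fun2444} after the substitution $s=K(\tau)$. Your route avoids the paper's most technical step (the dichotomy lemma) at the price of a two-stage ODE analysis and of the auxiliary observation that convexity of $g$ plus \eqref{fun2444} force $g(z)/z\to\infty$, which is what absorbs the linear correction $M\bar v$; both arguments ultimately rest on the same two inputs, namely $\mu(\lambda)<0$ and the Osgood condition. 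Two small points to tighten: (i) in the step $\dot J\ge c_0J$ do not invoke a positive lower bound for $\psi$ on $\overline\Omega$ (this fails for Dirichlet data, where $\phi$ vanishes on $\partial\Omega$); the clean computation $|\mu|\int_\Omega(v-z)b'(z)\psi\,dx=|\mu|\int_\Omega(K(u)-K(w))\phi\,dx\ge|\mu|\int_\Omega K'(w)(u-w)\phi\,dx=|\mu|J$ needs no such bound; (ii) the estimate $J\le Bm(\bar v-\bar z)$ uses $\sup_\Omega 1/K'(w)=1/K'(\inf_\Omega w)<\infty$, which is where the assumption $K'(0)>0$ from \eqref{314} enters — the same place the paper uses $K'(m)$ in defining $c_1,c_2$ — so, like the paper's proof, your argument would need the remark's modification to cover the porous-medium case $K'(0)=0$.
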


 \begin{rem}
For Neumann and Robin boundary conditions, the proof of Theorem \ref{blowSS} can be generalised to allow for $K'(0)=0$ (since then we have $m:=\inf_{x\in \Omega} w(x) > 0$), which allows one to include the porous medium problem.
\end{rem}
\begin{proof}[Proof of Theorem \ref{blowSS}]

Let $\phi$ be the first eigenfunction of problem \eqref{lin} corresponding to a negative eigenvalue $\mu(\lambda)$. We consider the function 
$
\theta (x,t) := u(x,t)-w(x) \geq 0,
$
which being initially non-negative remains non-negative thanks to the comparison principle proven in \cite{vldt}. Thus, the monotonicity assumption on $K$ also ensures $ K(u)-K(w) \geq 0$.  Moreover, by using the equations \eqref{eq122} and \eqref{ss}, we have 
$
\theta_{t}= \Delta(K(u)-K(w))+\lambda (f(u)-f(w)). 
$

Next, we test $\theta_{t}$ with $K'(w)\,\phi$, integrate by parts and use that $w$ is a solution of \eqref{lin} to calculate
\begin{align}
\frac{d}{dt}\int_{\Omega} & \theta K'(w)\,\phi\, dx  =  \int_{\Omega} (K(u)-K(w))\,\Delta [K'(w)\,\phi] \, dx \nonumber \\
& + \lambda \int_{\Omega}  (f(u)-f(w))\,K'(w)\, \phi\, dx \nonumber \\
 = &  \int_{\Omega} (K(u)-K(w))(-\lambda f'(w)\,\phi - \mu \phi) \, dx + \lambda  \int_{\Omega}  (f(u)-f(w))\,K'(w)\, \phi\, dx \nonumber \\
 = &\  \lambda  \int_{\Omega}  \big[ K'(w) (f(u)-f(w)) -f'(w) (K(u)-K(w)) \big]\, \phi \, dx \nonumber \\
 & - \mu  \int_{\Omega}  (K(u)-K(w))\, \phi\, dx. \label{AA}
\end{align}
Now, we introduce the function $h(s):=f(s)-f(0)-sf'(0)$. It is easily verified that $h$ satisfies 
\be \label{conh} h(s)>0, \;\;s\in\mathbb{R^+}, \;\;h'(s)>0,
\;\;h(0)=h'(0)=0,\;\; h''(s)>0,\;\;s\in\mathbb{R},\ee
and its minimum is $(0,\,h(0))=(0,0)$. 

In the following, we apply a dichotomy argument (see also \cite[pag. 1355]{l} and \cite[proof of Theorem 9]{vldt} for its generalisation to $K(u)$ satisfying assumptions \eqref{314} in the case of supercritical $\lambda>\lambda^*$) in order to prove that there exists a constant 
$\Lambda>0$ such that, for any $s\geq 0$ and independently on $x\in \Omega$, the following estimate holds:
\be\label{major}
F_w(s):= K'(w) (f(w+s)-f(w)) -f'(w) (K(w+s)-K(w)) \geq \Lambda\, h(s).
\ee
Clearly it is enough to study the case $s>0$. To this aim, we start by defining the constants
\[
m:=\inf_{x\in \Omega} w(x) \geq 0,\;\;\;\; M:= \sup_{x\in \Omega} w(x)< + \infty,\;\;\;\; c_1:= \frac{f'(M)}{K'(m)},\;\;\;\;c_2:=\frac{f(M)}{K'(m)}.
\]
Using the growth condition \eqref{fun2444} we can ensure the existence of $S=S(f,K,\lambda)$, such that
\be \label{line}
\frac{f(s)}{K'(s)} > 2(c_1 \,s +c_2),\;\;\; \forall \, s>S,
\ee
it is sufficient indeed to chose $S=\sup\left\{ t\;:\; \frac{f(t)}{K'(t)} - 2(c_1 \,t +c_2)=0\right\}$. The dichotomy consist in dealing with the case $s>S$ and $s\leq S$ separately. First, for $s>S$, we can estimate
$
 f(w+s)  > 2[ c_1 (w+s)  + c_2] K'(w+s)  
         \geq 2 [s \,c_1 \, K'(w+s) + f(M)].
$
Using the previous inequality, since
\[
f'(w) (K(w+s)-K(w)) = K'(\xi)\,s f'(w) \leq K'(w+s)\,s\,f'(M),
\] 
by choosing $\Lambda_1$ such that $0<\Lambda_1< K'(m)/2$ and with the help of the above relation we can deduce, again for $s>S$,
\be\label{destra}
\begin{split}
F_w(s)  \geq & K'(w) (f(w+s)-f(w) - c_1 \, s \, K'(w+s)) \\
                \geq & K'(w) (f(w+s)-f(M) - c_1 \, s \, K'(w+s)) \\
                \geq & \frac{1}{2} K'(w)\, f(w+s) 
                 \geq  \frac{1}{2} K'(w)\, f(s) \\
                \geq &  \Lambda_1 (f(s)-f(0)-sf'(0)) = \Lambda_1 h(s) > 0.
\end{split}
\ee
Now we study the other case, $s\leq S$. We start by using the Taylor expansion up to the second order to rewrite $F_w(s)$ as
\[
F_w(s)= \big( f''(w+\eta) K'(w)-K''(w+\eta)f'(w) \big) \frac{s^2}{2},
\]
for some $0<\eta< s$. The hypothesis on $g(s)$  gives 
\be\label{neasxe1}\frac{f''(\theta)}{K''(\theta)}>\frac{f'(\theta)}{K'(\theta)}>\frac{f'(w)}{K'(w)},\;\;\;\theta=w+\eta>0,\;\;\mbox{with}\;\;0\leq\eta\leq S.\ee
Hence, from (\ref{neasxe1}) we have:
\[
\begin{split}
F_w(s) & = \big[f''(w+\eta)K'(w)-K''(w+\eta)f'(w)\big]\frac{s^2}{2} \\
         &\geq  \min_\eta\big [f''(w+\eta)K'(w)-K''(w+\eta)f'(w)\big]\frac{s^2}{2} \\
        & \geq\big[f''(w+\tilde\eta)K'(w)-K''(w+\tilde\eta)f'(w)\big]\frac{s^2}{2} \\
         &\geq \min_{x\in\overline{\Omega}}\big[f''(w+\tilde\eta)K'(w)-K''(w+\tilde\eta)f'(w)\big]\frac{s^2}{2}\\
        &\geq\big[f''(w(\tilde{x})+\tilde\eta)K'(w(\tilde{x}))-K''(w(\tilde{x})+\tilde\eta)f'(w(\tilde{x}))\big]\frac{s^2}{2},
\end{split}
\]
where we have denoted by $\tilde\eta\in [0, S]$ and $\tilde x\in \overline{\Omega}$ the values and points, respectively, which attain the minimum and the maximum, respectively, in the previous inequalities. 
Moreover, thanks  to (\ref{neasxe1}) we can easily verify that 
\[
\Lambda_2:=\frac{1}{2}\big[f''(w(\tilde{x})+\tilde\eta)K'(w(\tilde{x}))-K''(w(\tilde{x})+\tilde\eta)f'(w(\tilde{x}))\big] >0.
\]
So, since $f''(0)>0$, there exists a $\Lambda_3>0$, small enough such that:
\be\label{1}F_w(s)\geq \Lambda_2 s^2\geq
\Lambda_3[f(s)-f(0)-sf'(0)]= \Lambda_3 h(s) > 0,\quad  \text{ if } s\leq S.
\ee
Choosing $\Lambda = \min\{\Lambda_1 , \Lambda_3\}$ and combining \eqref{1} and \eqref{destra} we get \eqref{major}.
\medskip

Finally, in order to prove the statement of the theorem, we denote
$
A(t):=\int_{\Omega} \theta K'(w)\,\phi\, dx,
$ 
and continue to estimate \eqref{AA}, 
\begin{align}
A'(t) &  \geq \lambda  \int_{\Omega}  \big[ K'(w) (f(u)-f(w)) -f'(w) (K(u)-K(w)) \big] \,\phi \, dx  \nonumber \\
&\quad - \mu \int_\Omega (K(u)-K(w))\, \phi\, dx \nonumber \\
 & \geq  \frac{\lambda \Lambda}{\|K'(w)\|_{\infty}} \int_\Omega h(\theta)K'(w)\, \phi \, dx - \mu \int_\Omega (K(u)-K(w))\, \phi\, dx\nonumber\\ 
 &\geq\frac{\lambda \Lambda}{\|K'(w)\|_{\infty}} h\left(\int_{\Omega} \theta K'(w)\,\phi\, dx\right) =  \lambda \Lambda \,h(A(t)).\label{AAA}
 \end{align}
Here, in the last inequality, we have used Jensen's  inequality for the convex function $h$ and the normalized
measure $K'(w)\,\phi\,dx$, i.e. $\int_{\Omega} K'(w)\,\phi\, dx=1$ as well as $\mu<0$ and 
$\int_\Omega (K(u)-K(w)) \phi\, dx \geq 0$ for all $t\ge0$. 

The inequality \eqref{AAA}, together with \eqref{major}, implies the blow-up of $A(t)$ and since $A(t)\leq||\theta(\cdot,t)||$,   blow-up of $\theta$ at $t_\theta^*$ and hence of $u$ at $t^*<\infty$ where $t^*\leq t^*_\theta$, since $u=w+\theta>\theta$  in $\Omega$ and $w$ is bounded.
\end{proof}






\begin{thebibliography}{99}



\bibitem{bel} H. Bellout, {\it A criterion for blow-up of solutions to semilinear heat equations,} SIAM J. Math. Anal.
{\bf 18}, (1987), 722--727.


\bibitem{bcmr} H. Brezis, T. Cazenave, Y. Martel and A. Ramiandrisoa,
{\it Blow-up for $u_t=\Delta u+\lambda g(u)$ revisited,} Adv. Diff. Eq, (1996),  73--90.


\bibitem{cr} M.G. Crandall and P.H. Rabinowitz, {\it Some Continuation and Variational Methods
for Positive Solutions of Nonlinear Elliptic Eigenvalue Problems,} Arch. Rational Mech. Anal. {\bf 58}, (1975), no. 3, 207--218.


\bibitem{g}  V.A. Galaktionov and J.L. Vazquez, {\it Necessary and sufficient conditions
for complete blow-up and extinction
for one-dimensional quasilinear heat equations}, Arch. Rational Mech. Anal. {\bf 129}, (1995), 225--244.


\bibitem{kc} H.B. Keller and D.S. Cohen, {\it Some positone problems
suggested by nonlinear heat generation,} J. Math. Mech. {\bf 16}, (1967),
1361--1376.


\bibitem{l} A.A. Lacey, {\it Mathematical analysis of thermal runaway for
spartially inhomogeneous reactions, } SIAM J. Appl. Math. {\bf 43}, (1983),
1350--1366.

\bibitem{lt} A.A. Lacey and D.E. Tzanetis, {\it Global existence and convergence to a singular steady state for a semilinear heat equation,}
Proc. Royal Soc. Edinb. {\bf 105A}, (1987), 289-305.


\bibitem{vldt2} E.A. Latos and D.E. Tzanetis, {\it Existence and blow-up of solutions
for a Semilinear Filtration problem,} Electron. J. Diff. Equ., Vol. 2013 (2013), No. {\bf 178}, pp. 1-20.

\bibitem{vldt} E.A. Latos and D.E. Tzanetis, {\it Existence and blow-up of solutions
for a non-local filtration and Porous Medium problem,} Proc. Edinb. Math. Soc. (2) {\bf 53}, (2010), no. 1, 195--209.


\bibitem{ls} H. Levine and P. Sacks, {\it Some existence and nonexistence
theorems for solutiona of degenerate parabolic equations,} J. Diff.
Eqns. {\bf 52}, (1984), 135--161.


\bibitem{pqps} P. Quittner and P. Souplet,   Superlinear Parabolic Problems: Blow-up, Global Existence and Steady States, {\it Birkh\"{a}user Advanced Texts,}
 (2007).

\bibitem{sa1} D.H. Sattinger, {\it Monotone methods in nonlinear elliptic and
parabolic boundary value problems, } Indiana Univ. Math. J. {\bf 21},
(1972), 979--1000.

\bibitem{soup} Philippe Souplet, {\it An optimal Liouville-type theorem for radial entire solutions of the porous medium equation with source,} J. Differential Equations {\bf 246} (2009), 3980-4005.


\bibitem{va} J.L. Vazquez, \textit{The Porous Medium equation: Mathematical theory},
Oxford Mathematical Monographs, Madrid, (2007).


\end{thebibliography}







\end{document}